\documentclass[12pt]{amsart}
\usepackage{amsmath}
\usepackage[mathscr]{eucal}
\usepackage{amssymb}
\usepackage{latexsym}
\usepackage{amsthm}
\usepackage{pifont}
\usepackage{ascmac}
\theoremstyle{definition}
\newtheorem{thm}{Theorem}[section]

\newtheorem{prop}[thm]{Proposition}
\newtheorem{lem}[thm]{Lemma}

\newtheorem{question}[thm]{Question}

\allowdisplaybreaks
%
\begin{document}
\title[Cubes and Fifth Powers Sums] 
      {Remark on a Paper \\by Izadi and Baghalaghdam 
\\ about Cubes and Fifth Powers Sums} 
\author[G.~Iokibe] 
       {Gaku IOKIBE}
\address[Gaku Iokibe]
        {Department of Mathematics,
Graduate School of Science,
Osaka University,
Toyonaka, Osaka 560-0043, Japan}
\email{u325137g@alumni.osaka-u.ac.jp}

\subjclass{11D41; 11D45, 14H52}
\keywords{Diophantine equations, Elliptic Curves}
\thanks{The present paper is to appear in {\it Math. J. Okayama University}.}

\begin{abstract}
In this paper, we refine the method introduced by Izadi and Baghalaghdam to search integer solutions to the Diophantine equation $X_1^5+X_2^5+X_3^5=Y_1^3+Y_2^3+Y_3^3$. We show that the Diophantine equation has infinitely many positive solutions.
\end{abstract}
\maketitle

\section{Introduction}

In \cite{I-B}, Izadi and Baghalaghdam consider the Diophantine equation:
\begin{equation}\label{eqn:ib0}
a(X_1^{\prime 5}+X_2^{\prime 5})+\sum_{i=0}^{n} a_iX_i^5=b(Y_1^{\prime 3}+Y_2^{\prime 3})+\sum_{i=0}^{m} b_iY_i^3
\end{equation}
where $n,m\in \mathbb{N}\cup \{0\}, \ a,b\neq 0, \ a_i,b_i$ are fixed arbitrary rational numbers. They use theory of elliptic curves to find nontrivial integer solutions to (\ref{eqn:ib0}). In particular, they discuss the equation:
\begin{equation}\label{eqn:ib}
X_1^5+X_2^5+X_3^5=Y_1^3+Y_2^3+Y_3^3
\end{equation}
and obtain integer solutions, for example:
$$8^5+6^5+14^5=(-110)^3+124^3+14^3,$$
$$128122^5+(-79524)^5+48598^5=359227580^3+(-251874598)^3+107352982^3.$$
However, no positive solutions are presented in their paper \cite{I-B}. In this paper, we refine their method to find positive solutions to (\ref{eqn:ib}).

Consider the Diophantine equation (\ref{eqn:ib}). Let:
\begin{equation}\label{transform_var}
\left\{
\begin{split}
\ X_1=t+x_1, \quad &X_2=t-x_1, \quad X_3=\alpha t, \\
Y_1=t+v, \quad\  &Y_2=t-v, \qquad Y_3=\beta t.
\end{split}
\right.
\end{equation}
Then we get a quartic curve:
\begin{equation}\label{eqn:qc}
C:v^2 = \frac{2+\alpha^5}{6}t^4 + \frac{20x_1^2-2-\beta^3}{6}t^2 + \frac{5x_1^4}{3}
\end{equation}
with parameters $x_1, \ \alpha, \ \beta \in \mathbb{Q}$. If we get a rational point $(t,v)$ on $C$, we can compute a rational solution to (\ref{eqn:ib}) (see \cite{I-B}).


Once we obtain rational solutions to (\ref{eqn:ib}), we can obtain integer solutions by multiplying an appropriate value to $X_i,\ Y_i$. In the same way, in order to obtain solutions in positive integers, it suffices to search positive rational solutions to equation (\ref{eqn:ib}).


\section{Additional Requirements for Positive Solutions}

Suppose that a positive rational solution $(X_i, Y_i)_{1\leq i\leq 3}$ to (\ref{eqn:ib}) is obtained from a given point $(t,v)$ on the quartic $C$.

\begin{prop}\label{prop:1}
Let $\alpha,\beta,x_1\in\mathbb{Q}$ and 
$$
F(t)=\frac{2+\alpha^5}{6}t^4 + \frac{20x_1^2-2-\beta^3}{6}t^2 + \frac{5x_1^4}{3}.
$$
A rational point $(t,v)$ on the curve $C:v^2=F(t)$ in (\ref{eqn:qc}) produces 
a positive rational solution to (\ref{eqn:ib}) by (\ref{transform_var})
if and only if
\begin{equation} \label{1st_condition}
\alpha, \beta > 0, \quad 
0\leq F(t)<t^2,\quad 
t>|x_1|
\end{equation}
hold.
\end{prop}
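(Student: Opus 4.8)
The plan is to unpack the definition of a \emph{positive rational solution} directly through the substitution (\ref{transform_var}) and to translate each positivity requirement $X_i>0$, $Y_i>0$ into an inequality on $t,v,\alpha,\beta,x_1$. Since the claimed criterion is an equivalence and each $X_i,Y_i$ is given by (\ref{transform_var}) as an explicit expression in these quantities, both directions reduce to the same bookkeeping read in opposite order; I would therefore prove the two implications at once by establishing a chain of equivalences between the positivity of the six coordinates and the stated inequalities.

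First I would treat the pair coming from $x_1$. The requirements $X_1=t+x_1>0$ and $X_2=t-x_1>0$ hold simultaneously if and only if $t>|x_1|$. A consequence worth recording immediately is that, because $|x_1|\ge 0$, this already forces $t>0$. With $t>0$ secured, the two ``third'' coordinates are then immediate: $X_3=\alpha t>0$ is equivalent to $\alpha>0$, and $Y_3=\beta t>0$ is equivalent to $\beta>0$.

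Next I would handle the pair $Y_1=t+v$ and $Y_2=t-v$. Exactly as above, both are positive precisely when $|v|<t$, which, using $t>0$, is equivalent to $v^2<t^2$. Here I would invoke the hypothesis that $(t,v)$ lies on $C$, so that $v^2=F(t)$, turning the condition into $F(t)<t^2$. Finally, because any rational point on $C$ satisfies $F(t)=v^2\ge 0$, the lower bound $0\le F(t)$ is automatically met and may be appended freely; read the other way, it is exactly the condition ensuring $F(t)$ is a rational square, i.e.\ that a point $(t,v)$ on $C$ exists at all. Collecting these equivalences yields precisely $\alpha,\beta>0$, $0\le F(t)<t^2$, and $t>|x_1|$.

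I do not anticipate a genuine obstacle, as the argument is elementary throughout; the only point demanding care is the logical ordering, namely deriving $t>0$ from $t>|x_1|$ \emph{before} using it to strip the factor $t$ from the conditions on $\alpha$ and $\beta$, and noting that the inequality $0\le F(t)$ is redundant for points already known to lie on $C$ yet is retained to give a clean, self-contained characterization.
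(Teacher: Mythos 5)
Your proof is correct and follows essentially the same elementary route as the paper: both unpack (\ref{transform_var}) coordinate by coordinate, derive $t>0$ from $X_1,X_2>0$, strip the factor $t$ to get $\alpha,\beta>0$, and convert $Y_1,Y_2>0$ into $v^2=F(t)<t^2$, with $0\le F(t)$ automatic from $v^2=F(t)$. One tiny aside in your write-up is off --- $0\le F(t)$ guarantees only a real square root, not that $F(t)$ is a \emph{rational} square --- but this remark is not load-bearing, since the proposition already assumes $(t,v)$ is a rational point on $C$.
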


\begin{proof}
If $X_i$ and $Y_i$ are positive in the solution in the form (\ref{transform_var}), 
we have
$t = (X_1 + X_2)/2 > 0$, $\alpha = 2X_3/(X_1 + X_2) > 0$ 
and $\beta = 2Y_3/(Y_1 + Y_2) > 0$. For
$(t, v)\in C$, one has that 
$0\le v^2 = F(t) < v^2 + Y_1Y_2 = t^2$. 
It follows from $x_1^2< x_1^2 + X_1X_2 = t^2$ that
$t > |x_1|$ for $t > 0$.
Conversely, suppose 
the inequalities in (\ref{1st_condition}) hold.
Then the given point $(t,v)$ on $C$ satisfies 
$v^2=F(t)<t^2$. This and (\ref{1st_condition}) 
immediately imply $X_i,Y_i>0$
in (\ref{transform_var}).
\end{proof}

\begin{prop}\label{prop:2}
Under the same assumption as Proposition \ref{prop:1}, let
\begin{equation}\label{abc_def}
a=\frac{2+\alpha^5}{6},\ b=\frac{20x_1^2-8-\beta^3}{6},\ c=\frac{5}{3}x_1^4.
\end{equation}
Then $a,b,c$ satisfy $b^2-4ac>0$ and $b<0$ if and only if there exists a real number $t$ such that $F(t)<t^2$.
\end{prop}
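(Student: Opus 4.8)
The plan is to reduce the quartic inequality $F(t)<t^2$ to a quadratic inequality in $s=t^2\ge 0$ and then read off the two conditions from the geometry of a parabola. First I would observe that
\[
F(t)-t^2=a\,t^4+b\,t^2+c,
\]
where $a,b,c$ are exactly the quantities in (\ref{abc_def}): the leading coefficient is unchanged, the constant term is $\tfrac{5}{3}x_1^4=c$, and the coefficient of $t^2$ becomes $\tfrac{20x_1^2-2-\beta^3}{6}-1=\tfrac{20x_1^2-8-\beta^3}{6}=b$. Hence the existence of a real $t$ with $F(t)<t^2$ is equivalent to the existence of a real $t$ with $a\,t^4+b\,t^2+c<0$.

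Next I would substitute $s=t^2$. Since $s$ ranges exactly over $[0,\infty)$ as $t$ ranges over $\mathbb{R}$, the claim becomes: there exists $s\ge 0$ with $g(s):=as^2+bs+c<0$. At this point I would record the two sign facts that drive everything: in the positive-solution setting of Section 2, Proposition \ref{prop:1} forces $\alpha>0$, so $a=\tfrac{2+\alpha^5}{6}>0$, while $c=\tfrac{5}{3}x_1^4\ge 0$ in all cases. Thus $g$ is an upward-opening parabola with $g(0)=c\ge 0$.

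For the direction $(\Leftarrow)$, assume $b<0$ and $b^2-4ac>0$. The vertex of $g$ sits at $s^{*}=-b/(2a)>0$ with value $g(s^{*})=\tfrac{4ac-b^2}{4a}<0$, so taking $t=\sqrt{s^{*}}$ produces the required real point. For the converse $(\Rightarrow)$, suppose $g(s)<0$ for some $s\ge 0$. Since $g(0)=c\ge 0$, necessarily $s>0$. If $b\ge 0$, then the vertex $s^{*}=-b/(2a)\le 0$, so $g$ is nondecreasing on $[0,\infty)$ and $g(s)\ge g(0)\ge 0$, a contradiction; hence $b<0$. Then $s^{*}>0$ and $\tfrac{4ac-b^2}{4a}=g(s^{*})\le g(s)<0$, which gives $b^2-4ac>0$.

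I expect the only genuinely delicate point to be the converse: one must invoke $c\ge 0$ to exclude the case $b\ge 0$ \emph{before} concluding $b^2-4ac>0$, since without control on the sign of $c$ a downward dip of $g$ could occur with nonnegative $b$. The positivity $a>0$ is equally essential, as it is what makes the parabola open upward, and I would flag explicitly that it rests on $\alpha>0$ coming from the running hypothesis that $(t,v)$ yields a positive solution.
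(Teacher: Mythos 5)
Your proposal is correct and takes essentially the same approach as the paper: both reduce $F(t)<t^2$ to the negativity of the quadratic $as^2+bs+c$ at some $s=t^2\ge 0$, and then read the answer off the upward parabola using $a>0$, $c\ge 0$, the position of the axis $-b/(2a)$, and the discriminant $b^2-4ac$. The only difference is presentational---the paper routes the argument through root-counting equivalences (four distinct roots of $\tilde{F}$, two distinct non-negative roots of the quadratic) where you compute directly with the vertex value, which incidentally handles the degenerate case $x_1=0$ (where $\tilde{F}$ has only three distinct real roots, so the paper's intermediate condition fails even though the conclusion still holds) more cleanly.
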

\begin{proof}
Let $\tilde{F}(t)=F(t)-t^2$. Since $\tilde{F}(0)=5x_1^4/3\geq 0,$ and in this case $a>0,$ 
it is easy to see that the following conditions are equivalent to each others:

(i) There exists a real number $t$ such that $F(t)<t^2$.

(ii) The equation $\tilde{F}(t)=0$ has four distinct solutions.

(iii) The quadratic equation $ax^2+bx+c=0$ has two distinct non-negative solutions.

(iv) The discriminant $D=b^2-4ac$ of the quadratic function $f(x)=ax^2+bx+c$
is positive, and the axis of the quadratic function $-b/2a$ is positive, and $f(0)\geq 0$. 

The condition (iv) holds if and only if 
``$b^2-4ac>0$ and $b<0$'',
since $a>0$ and $f(0)=c=5x_1^4/3\geq 0$.
\end{proof}

\section{Example for $X_1^5+X_2^5+X_3^5=Y_1^3+Y_2^3+Y_3^3$}

Let us first search parameters $(x_1,\alpha, \beta)$ such that 
$$
0< \alpha, \beta, \quad b<0< b^2-4ac
$$
with $a,b,c$ given by (\ref{abc_def}) 
and such that the quartic curve $C$ of (\ref{eqn:qc})
has at least one rational point.
Note that these are necessary to satisfy 
conditions of Proposition \ref{prop:1}, \ref{prop:2}.
Then, the curve $C$ is birationally equivalent 
to an elliptic curve $E$ over $\mathbb{Q}$.
If $E$ has positive rank, then $C$ has infinitely many rational points.

Let $(x_1,\alpha, \beta)=(2,1,16)$. Then the quartic:
$$C:v^2=\frac{1}{2}t^4 - \frac{2009}{3}t^2 + \frac{80}{3},$$
has a rational point $(t,v)=(44,760).$
By $T=t-44$, we transform $C$ into
$$
C' : v^2=\frac12 T^4+88 T^3+\frac{15415}{3} T^2+ \frac{334312}{3}T+ 760^2
$$
which is birationally equivalent over $\mathbb{Q}$ to 
the cubic elliptic curve (see \cite[Theorem 2.17]{W}, \cite{I-B}):
$$E:y^2+\frac{41789}{285}xy+133760y=x^3-\frac{76876021}{324900}x^2-1155200x
+\frac{2460032672}{9},$$
where:
$$T=\frac{2\cdot 760(x+\frac{15415}{3})-\frac{334312^2}{2\cdot 3^2\cdot 760}}{y},\quad v=-760+\frac{T(Tx-\frac{334312}{3})}{2\cdot 760}.
$$
Using the Sage software \cite{Sage}, 
we find that the cubic curve $E$ is an elliptic curve which has rank 2 and the generators of $E$ are:
$$P_1=\left(-\frac{1802189}{1521},\frac{5513659679}{417430}\right), \quad P_2=\left(-\frac{351379}{363},\frac{47356344241}{2276010}\right).
$$ 
We now consider the subset
$$
C_0=\left\{
(t,v)\in C\mid 0 \le F(t)<t^2\right\} \subset C
$$
whose points satisfy another condition
(\ref{1st_condition}) of 
Proposition \ref{prop:1}. 
The two quartic equations:
$$
F(t)=\frac{1}{2}t^4 - \frac{2009}{3}t^2 + \frac{80}{3}=0, \quad \tilde{F}(t)=\frac{1}{2}t^4 - \frac{2009}{3}t^2 + \frac{80}{3}-t^2=0
$$
have respectively solutions:
$$t=\pm \frac{1}{3}\sqrt{6027\pm 3\sqrt{4035601}},\quad t=\pm \frac{2}{3}\sqrt{1509\pm 3\sqrt{252979}}.$$
Let us take larger solutions as:
\begin{eqnarray*}
a_1 &=& \frac{1}{3}\sqrt{6027+3\sqrt{4035601}}\simeq 36.59635926...\\
a_2 &=& \frac{2}{3}\sqrt{1509+3\sqrt{252979}}\simeq 36.62367500...
\end{eqnarray*}
If a point $(t_0,v_0)$ on $C$ satisfies $a_1\leq t_0\leq a_2$, then 
$(t_0,v_0)$ lies on $C_0$.
We now make use of the composition law of points on the elliptic curve $E$. 
Since $E$ has positive rank, we can test infinitely many 
rational points of $E$ till finding a point $(t_0,v_0)$ on $C_0$.
We find that the rational point 
$$
Q=2P_1-P_2=\left(\tfrac{304845381192111829037}{58470412871306667},-\tfrac{4767546475726965161322288395890039}{4652843756178203561643745770}\right)
$$
on $E$ corresponds to
$$
(t_0,v_0)=\left(\tfrac{170815619844155909156204}{4664941095250009917983},
-\tfrac{690740884062625663919872925291699877683029096}{21761675422152362106175457381859866386788289}\right)
$$
on $C_0$, and creates a positive rational solution:
\begin{align*}
X_1&=\tfrac{180145502034655928992170}{4664941095250009917983}\simeq 38.61688676... \\
\vspace{2pc}
X_2&=\tfrac{161485737653655889320238}{4664941095250009917983}\simeq 34.61688676... \\
X_3&=\tfrac{170815619844155909156204}{4664941095250009917983}\simeq 36.61688676... \\
Y_1&=\tfrac{106103920658980331397442614601687483092587436}{21761675422152362106175457381859866386788289}\simeq 4.875723886... \\
Y_2&=\tfrac{1487585688784231659237188465185087238458645628}{21761675422152362106175457381859866386788289}\simeq 68.35804964... \\
Y_3&=\tfrac{2733049917506494546499264}{4664941095250009917983}\simeq 585.8701882...
\end{align*}

Next we shall prove that the Diophantine equation (\ref{eqn:ib}) has infinitely many positive solutions. The real locus of elliptic curve $E(\mathbb{R})$ can be regarded as a compact topological subspace of complex projective variety $E$. 

\begin{lem}\label{lem:1}
If the rank of elliptic curve $E$ over $\mathbb{Q}$ is positive, every point of $E(\mathbb{Q})$ is an accumulation point in $E(\mathbb{R})$.
\end{lem}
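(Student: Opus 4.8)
The plan is to exploit the structure of $E(\mathbb{R})$ as a compact one-dimensional real Lie group together with the group law on $E$. Recall that $E(\mathbb{R})$, being the real locus of an elliptic curve, is a compact abelian Lie group of dimension one whose identity component $E(\mathbb{R})^{0}$ is isomorphic as a topological group to the circle $\mathbb{R}/\mathbb{Z}$, while the quotient $E(\mathbb{R})/E(\mathbb{R})^{0}$ has order $1$ or $2$. I would combine this with the elementary fact that every closed subgroup of $\mathbb{R}/\mathbb{Z}$ is either finite or the whole circle.

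First I would produce a point of infinite order. Since $E$ has positive rank, there exists $P\in E(\mathbb{Q})$ of infinite order. Because $E(\mathbb{R})/E(\mathbb{R})^{0}$ has order at most $2$, the point $2P$ lies in the identity component $E(\mathbb{R})^{0}$, and it still has infinite order.

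Next I would show that the cyclic group generated by $2P$ is dense in $E(\mathbb{R})^{0}$. The closure $\overline{\langle 2P\rangle}$ is a closed subgroup of $E(\mathbb{R})^{0}\cong\mathbb{R}/\mathbb{Z}$; since $\langle 2P\rangle$ is infinite (as $2P$ has infinite order), this closed subgroup is infinite and must therefore equal the whole circle. Hence $\langle 2P\rangle$ is dense in $E(\mathbb{R})^{0}$. In particular the identity $O$ is an accumulation point of $\langle 2P\rangle\subset E(\mathbb{Q})$: every neighborhood of $O$ contains points $n\cdot(2P)$ with $n\neq 0$, which are rational and distinct from $O$.

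Finally I would transport this to an arbitrary rational point by translation. For $Q\in E(\mathbb{Q})$ the map $x\mapsto x+Q$ is a homeomorphism of $E(\mathbb{R})$ carrying $E(\mathbb{Q})$ bijectively onto itself and sending $O$ to $Q$. Since $O$ is an accumulation point of $E(\mathbb{Q})$, its image $Q$ is one as well, which proves the lemma. The only point requiring care is the passage to the identity component—one must replace $P$ by $2P$ to be sure of landing in $E(\mathbb{R})^{0}$—but once inside the circle the classification of its closed subgroups makes the density immediate, so there is no genuine analytic obstacle.
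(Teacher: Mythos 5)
Your proof is correct, but it takes a genuinely different route from the paper's. The paper argues purely topologically: $E(\mathbb{R})$ is a compact topological group, positive rank makes $E(\mathbb{Q})$ an infinite subgroup, so by compactness $E(\mathbb{Q})$ has \emph{some} accumulation point in $E(\mathbb{R})$; homogeneity (group translations are homeomorphisms) then spreads this to every rational point. Your argument instead invokes the structure theory of the real locus: $E(\mathbb{R})^{0}\cong\mathbb{R}/\mathbb{Z}$ with component group of order at most $2$, so $2P$ lands in the identity component, and the classification of closed subgroups of the circle forces $\langle 2P\rangle$ to be dense there; then you translate by rational points. Each approach has its advantages. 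The paper's is shorter and applies verbatim to any compact group with an infinite subgroup, but as written it leaves a subtle step implicit: the accumulation point handed to you by compactness need not be rational, and translation by $Q-x_{0}$ with $x_{0}\notin E(\mathbb{Q})$ does not preserve $E(\mathbb{Q})$, so one must first pass to differences $P_{n}-P_{m}$ of nearby rational points to see that the identity $O$ itself is an accumulation point before translating. Your proof sidesteps this entirely, since the first accumulation point you exhibit is the rational point $O$ and all subsequent translations are by rational points; the price is the appeal to the Lie-group structure of $E(\mathbb{R})$, and the reward is a strictly stronger conclusion, namely that $E(\mathbb{Q})$ is dense in the whole identity component $E(\mathbb{R})^{0}$ (which is in fact the kind of statement the paper's Theorem 3.2 ultimately needs, since it requires rational points filling out an entire real interval of the curve).
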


\begin{proof}
Since $E(\mathbb{R})$ is a compact topological group, and $E(\mathbb{Q})$ is an infinite 
subgroup of $E(\mathbb{R})$, there is at least one accumulation point of $E(\mathbb{Q})$ in $E(\mathbb{R})$. The group operations are homeomorphisms from $E(\mathbb{R})$ to itself. 
Therefore all points of $E(\mathbb{Q})$ are accumulation points of $E(\mathbb{R})$.
\end{proof}

\begin{thm}
The Diophantine equation (\ref{eqn:ib}) has infinitely many positive solutions.
\end{thm}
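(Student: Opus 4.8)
The plan is to leverage the concrete rank-2 elliptic curve $E$ constructed in the preceding example, together with Lemma \ref{lem:1}, to exhibit an entire arc of rational points that land in the good region $C_0$. Recall from Proposition \ref{prop:1} that a rational point $(t,v)$ on $C$ produces a positive rational solution to (\ref{eqn:ib}) exactly when it lies in the set $C_0 = \{(t,v)\in C \mid 0\le F(t)<t^2\}$; and from the example we already know $C_0$ is nonempty, containing the image of the rational point $Q=2P_1-P_2$ with $t$-coordinate $t_0\simeq 36.6169$ strictly between $a_1$ and $a_2$. So it suffices to produce infinitely many rational points of $E$ whose images under the birational map land in $C_0$.

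First I would translate the open region $C_0$ through the birational equivalence into an open neighborhood on $E(\mathbb{R})$. The maps $T=t-44$, together with the explicit rational formulas relating $(t,v)$ and $(x,y)$, are continuous (indeed algebraic) and define a homeomorphism between the relevant pieces of $C(\mathbb{R})$ and $E(\mathbb{R})$ away from the finitely many points of indeterminacy. Since $C_0$ is the preimage of the open interval $a_1<t<a_2$ (an open condition cutting out an arc of the real curve), its image $U\subset E(\mathbb{R})$ is a nonempty open subset, and the point $Q\in E(\mathbb{Q})$ corresponding to $(t_0,v_0)$ lies in $U$. By Lemma \ref{lem:1}, since $E$ has positive rank, the point $Q$ is an accumulation point of $E(\mathbb{Q})$ in $E(\mathbb{R})$. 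Because $U$ is an open neighborhood of $Q$, it therefore contains infinitely many distinct rational points of $E$.

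Next I would pull these infinitely many rational points back to $C_0$. Each such point of $E(\mathbb{Q})\cap U$ maps back to a rational point $(t,v)$ of $C$ with $a_1\le t\le a_2$, hence a point of $C_0$, which by Proposition \ref{prop:1} yields a positive rational solution to (\ref{eqn:ib}). Distinct points of $E$ produce distinct $t$-coordinates (the map is generically injective), so these give infinitely many genuinely distinct positive rational solutions. Finally, as noted in the introduction, clearing denominators by multiplying all $X_i,Y_i$ by a common positive integer preserves positivity and homogeneity-compatibility of the degree-$5$ and degree-$3$ terms only after the appropriate scaling argument; more carefully, since infinitely many distinct rational solutions scale to infinitely many distinct integer solutions, we obtain infinitely many positive integer solutions.

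The main obstacle I anticipate is the bookkeeping around the birational map: one must check that the finitely many points of indeterminacy (poles of $T$ or points where $y=0$) do not obstruct the argument, that the image $U$ of the arc $C_0$ is genuinely open in $E(\mathbb{R})$ and contains $Q$ in its interior rather than on its boundary, and that passing from infinitely many rational solutions to infinitely many positive integer solutions is done cleanly. The topological input — that a single rational point in an open set forces infinitely many, via the accumulation-point property of the infinite subgroup $E(\mathbb{Q})$ in the compact group $E(\mathbb{R})$ — is exactly what Lemma \ref{lem:1} supplies, so the remaining work is to verify that $Q$ sits in the interior of the image of $C_0$ and that the correspondence is well behaved there.
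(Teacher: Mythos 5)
Your proposal is correct and follows essentially the same route as the paper: use the rank-$2$ curve $E$ from the example, invoke Lemma \ref{lem:1} to make $Q$ (hence $(t_0,v_0)$) an accumulation point of rational points, and conclude that the open arc $a_1<t<a_2$ on $C$ contains infinitely many rational points, each giving a positive rational (hence, after scaling $X_i$ by $\lambda^3$ and $Y_i$ by $\lambda^5$, positive integer) solution. One minor slip: Proposition \ref{prop:1} requires $t>|x_1|$ in addition to membership in $C_0$, a condition you drop when you call $C_0$ membership ``exactly'' the positivity criterion; this is harmless here because your points have $t$ near $t_0\in(a_1,a_2)$ with $a_1\simeq 36.6>2=|x_1|$, which is precisely the check the paper makes explicitly at the end of its proof.
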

\begin{proof}
The part of $C_0$ 
has one rational point $(t_0,v_0)$
which corresponds to the above point $Q$. 
By Lemma \ref{lem:1}, the point $Q$ is an accumulation point of $E(\mathbb{Q})$ 
in $E(\mathbb{R})$, and $(t_0,v_0)$ is that of $C(\mathbb{Q})$ in $C(\mathbb{R})$.
Thus the part of $C_0$ includes infinitely many rational points. Since $2=|x_1|<a_1=36.59635926...$, 
they correspond to positive rational solutions to (\ref{eqn:ib}).
\end{proof}

\section{Example for $X_1^5+X_2^5=Y_1^3+Y_2^3+Y_3^3$}
Let $\alpha=0$. Then (\ref{eqn:ib}) gives another Diophantine equation:
\begin{equation}\label{eqn:ib2}
X_1^5+X_2^5=Y_1^3+Y_2^3+Y_3^3.
\end{equation}
In the same way, we can obtain a rational or positive rational solutions of it. For example, let $x_1=10,\ \beta = 18$. Then the quartic curve:
$$
C:v^2=\frac13 t^4-639 t^2+ \frac{50000}{3}
$$ 
has a rational point $(t,v)=(-5,30)$ and can be regarded as an elliptic curve over $\mathbb{Q}$ that has rank 2. It is birationally equivalent to:
$$
E: y^2+\frac{1867}{9}xy-400y=
x^3-\frac{3676525}{324} x^2-1200 x+ \frac{367652500}{27}.
$$
From this, we can compute positive rational solutions to (\ref{eqn:ib2}). 
For example, there is a point $Q=(x_0,y_0)$ on $E$ with
$$
x_0=\tfrac{9233921838917810856046138588468998730}{71226852166762122405616706766475947}
$$
corresponding to $(t_0,v_0)$ on $C$ with
$$
t_0=\tfrac{7869911761727476320751662986237524106650}{180965667579279848488380712753242417827}
$$
which creates the following solution to (\ref{eqn:ib2}):
\begin{align*}
X_1&=\tfrac{9679568437520274805635470113769948284920}{180965667579279848488380712753242417827},\\
X_2&=\tfrac{6060255085934677835867855858705099928380}{180965667579279848488380712753242417827},\\
Y_1&=\tfrac{2102579397586077496858869804126511993988094601307100986270258503567645177035000}{32748572842414417658282657731373155447687070419319181813277645661864847401929},\\
Y_2&=\tfrac{745788273916000738265027095213285105579870143595196644344754733646843241464100}{32748572842414417658282657731373155447687070419319181813277645661864847401929},\\
Y_3&=\tfrac{141658411711094573773529933752275433919700}{180965667579279848488380712753242417827}.
\end{align*}

The case of $\beta=0$ will be discussed briefly in 5.2 below.

\section{Parameters $(x_1,\alpha ,\beta)$ from Trivial Solutions}
\subsection{}
There are several trivial solutions; for example:
$$1^5+1^5+1^5=1^3+1^3+1^3.
$$
We call solutions to (\ref{eqn:ib}) which consist of $0,\ \pm 1$ trivial.　\hspace{-0.5pc}We are going to check some of them to search integer (or positive) solutions.

A solution to (\ref{eqn:ib}) 
may decide parameter. 
For example, when $X_i=Y_i=1$ ($i=1,2,3$),
we get $(x_1,\alpha ,\beta )=(0,1,1)$. Then:
$$C:v^2=\frac{1}{2}t^4-\frac{1}{2}t^2$$
has a singular point $(t,v)=(0,0)$　\hspace{-0.5pc}and can be parametrized 
by one parameter. 
Let us divide both sides of $C$ by $t^4$ and
substitute $s,\ w$ for $1/t,\ v/t^2$ respectively. Then:
$$C':w^2=\frac{1}{2}-\frac{1}{2}s^2$$ 
has a rational point $(s,w)=(1,0)$. Hence we can parametrize rational points on $C'$ and integer solutions to (\ref{eqn:ib}). That is to say we have:
\begin{align*}
&\left(\frac{2k^2+1}{2k^2-1}\right)^5+
\left(\frac{2k^2+1}{2k^2-1}\right)^5+
\left(\frac{2k^2+1}{2k^2-1}\right)^5
\\
&= \left(\frac{4k^4-4k^3-2k-1}{(2k^2-1)^2}\right)^3
+\left(\frac{4k^4+4k^3+2k-1}{(2k^2-1)^2} \right)^3
+\left(\frac{2k^2+1}{2k^2-1}\right)^3
\end{align*}
where $k\in \mathbb{Q}$. We can see that large enough $k$ give positive solutions to (\ref{eqn:ib}). For example:
$$\left(\frac{9}{7}\right)^5+\left(\frac{9}{7}\right)^5+\left(\frac{9}{7}\right)^5=\left(\frac{27}{49}\right)^3+\left(\frac{99}{49}\right)^3+\left(\frac{9}{7}\right)^3$$
where $k=2$. Since $X_1=X_2=X_3=Y_3$, this solution also gives positive solution to another Diophantine equation
$3X^5=Y_1^3+Y_2^3+X^3$. Moreover it satisfies $X_1+X_2+X_3=Y_1+Y_2+Y_3$ because $\alpha = \beta$.

\subsection{}
From another trivial solution:
$$
1^5+0^5+0^5=1^3+0^3+0^3,
$$
we can derive parameters $(x_1,\alpha, \beta)=(\frac{1}{2},0,0)$. Then:
$$C:v^2=\frac{1}{3}t^4+\frac{1}{2}t^2+\frac{5}{48}$$
is an elliptic curve defined over $\mathbb{Q}$ with rational point $(t,v)=(\frac{1}{2},\frac{1}{2})$. It is birationally equivalent to:
$$
E:y^2+\frac{4}{3}xy+\frac{2}{3}y=x^3+\frac{5}{9}x^2-\frac{1}{3}x-\frac{5}{27}
$$
over $\mathbb{Q}$ and has rank 1. Hence we can apply the method of Section 3 
to compute positive solutions to 
\begin{equation} \label{caseC}
X_1^5+X_2^5=Y_1^3+Y_2^3
\end{equation}
as a special case of 
(\ref{eqn:ib}) with $X_1,X_2,Y_1,Y_2>0$, $X_3=Y_3=0$ 
(where $\alpha=\beta=0$ in (\ref{transform_var})). 
For example, a point
$$
Q=\left(\tfrac{10017045137918654785}{165672066306928896},
\tfrac{29224609136538294659462738431}{67433225470590933809197056}
\right)
$$ 
on $E$
corresponding to the point
$$
(t_0,v_0)=\left(
\tfrac{2806052350871126431439}{4379016004568066987998},
\tfrac{5797926783162005502807971914786692611082209}{9587890584131638439948667971418559938024002}
\right)
$$
on $C$
creates the positive solution to (\ref{caseC}):
\begin{align*}
X_1&=\tfrac{2497780176577579962719}{2189508002284033493999},\ 
X_2=\tfrac{308272174293546468720}{2189508002284033493999},\
\\
Y_1&=\tfrac{5970900430111130674379700360675596051258385}{4793945292065819219974333985709279969012001}, \\
Y_2&=\tfrac{172973646949125171571728445888903440176176}{4793945292065819219974333985709279969012001}.
\end{align*}
\subsection{}
There exists one more parameter with $\beta = 0$, $(x_1,\alpha, \beta)=(0,0,0)$, 
which is derived from the trivial solution:
$$1^5+1^5+0^5=1^3+1^3+0^3.$$
Then the rational points on:
$$C:v^2=\frac{1}{3}t^4-\frac{1}{3}t^2$$
can be parametrized. Thus we have:
{\small
\begin{equation*}
\left(\frac{3k^2+1}{3k^2-1}\right)^5+\left(\frac{3k^2+1}{3k^2-1}\right)^5
=\left(\frac{9k^4-6k^3-2k-1}{(3k^2-1)^2}\right)^3
+\left(\frac{9k^4+6k^3+2k-1 }{(3k^2-1)^2}\right)^3,
\end{equation*}
}

\vspace{-3mm}
\noindent
where $k\in \mathbb{Q}$. For example, substituting $2$ for $k$, we have:
$$
\left(\frac{13}{11}\right)^5+\left(\frac{13}{11}\right)^5+0^5=\left(\frac{91}{121}\right)^3+\left(\frac{195}{121}\right)^3+0^3.
$$
The solutions which are obtained in these way give solutions to another Diophantine equation $2X^5=Y_1^3+Y_2^3$.
\subsection{}

It is not simple to find parameters $(x_1,\alpha ,\beta )$ that produce elliptic curves for non-trivial solutions $(X_i, Y_i)_{1\leq i\leq 3}$. In particular, the author could not find a good parameter for $\beta =0,\ \alpha \neq 0$:

\begin{question}
Find (a good method for) positive solutions to: 
\begin{equation*}
X_1^5+X_2^5+X_3^5=Y_1^3+Y_2^3.
\end{equation*} 
\end{question}

\bigskip
{\it Acknowledgement}:
The author would like to thank the referee for many valuable suggestions to
improve this article.


\begin{thebibliography}{9}
\bibitem{I}
G.Iokibe: \textit{Search for positive solutions to Diophantine equations with cubes and fifth powers sums}, Master thesis, Department of Mathematics, Osaka University, February 2018.
\bibitem{I-B}
F.Izadi and M.Baghalaghdam: \textit{On the Diophantine equation in the form that a sum of cubes equals a sum of quintics}, 
Math. J. Okayama Univ. {\bf 61} (2019), 75--84.
(arXiv:1704.00600v1 [math.NT] 30 Mar 2017)

\bibitem{Sage}
Sage software, available from http://www.sagemath.org/
\bibitem{S}
J. Silverman: \textit{The Arithmetic of Elliptic Curves}, Graduate Texts in Mathematics, vol. 106, Springer-Verlag, New York, second edition, 2008.
\bibitem{W}
L. C. Washington:
\textit{Elliptic Curves: Number Theory and Cryptography},
Chapman \& Hall/CRC2003
\end{thebibliography}
\end{document}